\documentclass[11pt,a4paper]{article}

\usepackage[utf8]{inputenc}
\usepackage[T1]{fontenc}
\usepackage{lmodern}
\usepackage{amsmath, amssymb, amsthm}
\usepackage{geometry}
\usepackage{hyperref}
\usepackage{graphicx}
\usepackage{booktabs}
\usepackage{tikz}
\usepackage{pgfplots}
\pgfplotsset{compat=1.17}
\usepackage{float}

\theoremstyle{plain}
\newtheorem{theorem}{Theorem}[section]      
\newtheorem{lemma}[theorem]{Lemma}
\newtheorem{proposition}[theorem]{Proposition}
\newtheorem{corollary}[theorem]{Corollary}

\theoremstyle{definition}
\newtheorem{definition}[theorem]{Definition}

\theoremstyle{remark}
\newtheorem{remark}[theorem]{Remark}

\DeclareMathOperator{\End}{End}

\usepackage[english]{babel} 
\usepackage{microtype}             

\usepackage{mathtools}             
\numberwithin{equation}{section}   
\allowdisplaybreaks                

\hypersetup{
	colorlinks=true,
	linkcolor=blue!50!black,
	citecolor=blue!50!black,
	urlcolor=blue!60!black,
	pdfauthor={Mohammed El Baraka},
	pdftitle={On the Density of CM Points in ell-Isogeny Volcanoes}
}

\newcommand{\F}{\mathbb{F}}
\newcommand{\Z}{\mathbb{Z}}
\newcommand{\Q}{\mathbb{Q}}

\DeclareMathOperator{\Cl}{Cl}

\DeclareMathOperator{\Gal}{Gal}
\DeclareMathOperator{\Li}{Li}
\DeclareMathOperator{\tr}{tr}
\DeclareMathOperator{\disc}{disc}

\DeclareMathOperator{\Ell}{Ell}
\DeclareMathOperator{\dens}{dens}

\title{\textbf{Weighted Distributions of Complex Multiplication Orders in Ordinary Isogeny Classes}}

\author{
	Mohammed El Baraka \& Siham Ezzouak\\
	\small Mathematics Department, Faculty of Sciences Dhar El Mahraz\\
	\small Sidi Mohamed Ben Abdellah University, Fez, Morocco\\
	\small \texttt{mohammed.elbaraka5@usmba.ac.ma}
}

\date{\today}

\providecommand{\keywords}[1]{\par\vspace{.8em}\noindent{\small\textbf{Keywords:} #1}}

\begin{document}
	\maketitle
	
\begin{abstract}
	We develop a global arithmetic framework for the study of endomorphism rings
	inside ordinary elliptic isogeny classes over finite fields. Let $p$ be a prime
	and let $\mathcal I(t,p)$ be an ordinary isogeny class over $\F_p$ with Frobenius
	trace $t$, so that the discriminant $\Delta=t^2-4p$ factors as $\Delta=v^2D_K$.
	In this setting, the possible endomorphism rings are precisely the quadratic
	orders $\mathcal O_f=\Z+f\mathcal O_K$ with $f\mid v$.
	
	Building on Deuring’s correspondence, we express the distribution of these
	orders in terms of weighted class numbers $h^\ast(D)=h(D)/w(D)$, thereby
	obtaining explicit formulas for global distributions across the entire isogeny
	class. This approach overcomes the limitations of the classical local viewpoint,
	in which the endomorphism ring is constant along each level of an
	$\ell$-isogeny volcano. In particular, we introduce weighted exact and cumulative
	distributions of endomorphism rings, which induce canonical laws for the
	$\ell$-adic valuation of conductors and recover the vertical stratification of
	$\ell$-volcanoes in an averaged sense.
	
	On the global side, varying the prime $p$, we relate the existence of curves with
	a prescribed CM order $\mathcal O_D$ to splitting conditions in the associated
	ring class field $L_D$. Using the Chebotarev density theorem, we obtain the
	natural density $1/(2h(D))$ for primes admitting CM by $\mathcal O_D$, providing
	a horizontal distribution law complementary to the vertical conductor
	distribution.
	
	These results establish a unified perspective linking Deuring theory, isogeny
	graph geometry, and class field theory, and provide a natural framework for
	quantitative and algorithmic studies of isogeny classes.
\end{abstract}
	
	\keywords{Elliptic curves over finite fields; Endomorphism rings; Isogeny volcanoes;
		Ring class fields; Chebotarev density theorem}
	
	MSC 2020 : 11G20, 11G15, 14K02, 14K22, 94A60.

	\section{Introduction}\label{sec:intro}
	
	Let $q=p^n$ be a prime power and let $\ell\neq p$ be a prime.
	The $\ell$-isogeny graph $G_\ell(\F_q)$ is the graph whose vertices are the
	$\F_q$-isomorphism classes of elliptic curves over $\F_q$, and where two vertices
	are joined by an edge if there exists an $\ell$-isogeny between them defined over
	$\F_q$. For ordinary elliptic curves, the connected components of $G_\ell(\F_q)$
	exhibit a highly structured geometry known as \emph{$\ell$-isogeny volcanoes},
	which play a central role in computational number theory and in isogeny-based
	cryptography \cite{Kohel1996,FouquetMorain2002,Sutherland2013Volcanoes}.
	
	\medskip
	\noindent\textbf{Arithmetic structure and limitations of the local viewpoint.}
	It is well known that if $E/\F_q$ is ordinary, then its endomorphism ring
	$\End(E)$ is an order in an imaginary quadratic field $K$. More precisely,
	$\End(E)$ lies between $\Z[\pi_E]$ and the maximal order $\mathcal O_K$, and is
	therefore of the form $\mathcal O_f=\Z+f\mathcal O_K$ for a uniquely determined
	conductor $f\ge 1$ (see \cite{Deuring1941,Waterhouse1969,Silverman2009}).
	
	In the $\ell$-isogeny graph, the $\ell$-adic valuation of this conductor controls
	the vertical position of a vertex inside its volcano component: descending edges
	correspond to multiplication of the conductor by $\ell$, while ascending edges
	correspond to division by $\ell$. As a consequence, each volcano component is
	stratified into levels on which the endomorphism ring is constant.
	
	However, this \emph{local, componentwise} viewpoint has an intrinsic limitation:
	within a fixed volcano component, the endomorphism ring is constant on each level,
	and any notion of “exact density” is therefore trivial, being supported on a single
	level. This makes it impossible to extract meaningful statistical information
	purely from the geometry of individual components.
	
	\medskip
	\noindent\textbf{Global distributions and weighted counting.}
	The main idea of this paper is to move beyond this limitation by introducing a
	\emph{global arithmetic viewpoint}, where distributions are defined across the
	entire ordinary isogeny class rather than within a single volcano component.
	
	Fix a prime $p$ and an ordinary isogeny class $\mathcal I(t,p)$ over $\F_p$.
	Writing the Frobenius discriminant as
	\[
	\Delta = t^2 - 4p = v^2 D_K,
	\]
	the possible endomorphism rings in $\mathcal I(t,p)$ are precisely the quadratic
	orders $\mathcal O_f$ with $f\mid v$.
	
	A key feature of our approach is that we work with \emph{weighted counts},
	expressed in terms of the quantities
	\[
	h^\ast(D) = \frac{h(D)}{w(D)},
	\]
	which naturally arise in Deuring’s correspondence. This avoids the well-known
	pathologies caused by automorphisms (notably for $j=0$ and $1728$), and yields
	arithmetically canonical distributions.
	
	Within this framework, we obtain explicit formulas for the global distribution of
	endomorphism rings inside $\mathcal I(t,p)$, leading to two fundamental invariants:
	\begin{itemize}
		\item a \emph{weighted exact distribution}, describing the contribution of each order
		$\mathcal O_f$;
		\item a \emph{weighted cumulative distribution}, encoding the conductor filtration.
	\end{itemize}
	
	These distributions induce, for each prime $\ell\neq p$, a canonical global law for
	the $\ell$-adic valuation of the conductor, which recovers the vertical structure
	of $\ell$-isogeny volcanoes when averaged over all components.
	
	\medskip
	\noindent\textbf{Horizontal variation and class field theory.}
	On the global side, when the prime $p$ varies, the existence of curves with
	endomorphism ring $\mathcal O_D$ is governed by class field theory.
	More precisely, for $p\nmid D$, the condition that there exists an elliptic curve
	$E/\F_p$ with $\End(E)\simeq \mathcal O_D$ is equivalent to the complete splitting
	of $p$ in the ring class field $L_D$ of $\mathcal O_D$, or equivalently to the
	complete splitting of the Hilbert class polynomial $H_D$ modulo $p$
	\cite{Cox2013,Silverman2009}.
	
	By the Chebotarev density theorem, this yields a natural density
	\[
	\frac{1}{[L_D:\Q]} = \frac{1}{2h(D)}
	\]
	for such primes. This provides a \emph{horizontal distribution law} that complements
	the vertical conductor distribution inside a fixed isogeny class.
	
	\medskip
	\noindent\textbf{Main contributions.}
	The contributions of this paper can be summarised as follows:
	\begin{itemize}
		\item we introduce a global framework for studying endomorphism rings inside
		ordinary isogeny classes via weighted distributions;
		\item we derive explicit formulas for these distributions in terms of weighted
		class numbers, providing a non-trivial refinement of Deuring’s correspondence;
		\item we establish a canonical connection between these global distributions and
		the vertical structure of $\ell$-isogeny volcanoes;
		\item we relate the occurrence of prescribed CM orders across varying primes to
		Chebotarev densities in ring class fields.
	\end{itemize}
	
	\medskip
	\noindent\textbf{Perspective.}
	These results highlight a fundamental duality between:
	\begin{itemize}
		\item vertical, conductor-driven distributions inside a fixed isogeny class;
		\item horizontal, Chebotarev-governed distributions as the base field varies.
	\end{itemize}
	This unified viewpoint provides a natural arithmetic framework for understanding
	the structure of isogeny graphs and suggests further connections with both
	computational and cryptographic applications.
	
	\medskip
	\noindent\textbf{Organisation of the paper.}
	Section~\ref{sec:prelim} recalls the structure of endomorphism rings and
	$\ell$-isogeny volcanoes.
	Section~\ref{sec:isogenyclass-distribution} introduces global weighted distributions
	inside a fixed isogeny class.
	Section~\ref{sec:chebotarev} studies the variation in the prime via ring class fields
	and Chebotarev densities.
\section{Isogeny classes, endomorphism rings, and $\ell$-volcanoes}
\label{sec:prelim}

\subsection{Ordinary isogeny classes and quadratic orders}

Let $q=p^n$ with $p$ prime, and let $E/\F_q$ be an elliptic curve with Frobenius
endomorphism $\pi=\pi_E$. Its characteristic polynomial is
\[
X^2 - tX + q,\qquad t=\tr(\pi),\qquad \#E(\F_q)=q+1-t,
\]
and its discriminant is
\[
\Delta_\pi := t^2-4q < 0.
\]
If $E$ is ordinary, then $K:=\Q(\pi)$ is an imaginary quadratic field and one has
\[
\Z[\pi]\subseteq \End(E)\subseteq \mathcal O_K,
\]
where $\mathcal O_K$ is the maximal order of $K$
(see \cite{Waterhouse1969,Silverman2009}).

Every order $\mathcal O\subseteq \mathcal O_K$ can be written uniquely as
\[
\mathcal O_f = \Z + f\,\mathcal O_K,\qquad f=[\mathcal O_K:\mathcal O_f]\ge 1,
\]
and its discriminant satisfies
\[
\disc(\mathcal O_f)=f^2D_K,
\]
where $D_K$ is the fundamental discriminant of $K$
(see \cite[Ch.~7]{Cox2013}).

\begin{lemma}[Inclusion of quadratic orders]\label{lem:order-inclusion}
	For $f,f'\ge 1$, one has
	\[
	\mathcal O_f\subseteq \mathcal O_{f'} \quad\Longleftrightarrow\quad f'\mid f.
	\]
\end{lemma}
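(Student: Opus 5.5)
We work with an explicit $\Z$-basis of $\mathcal O_K$ and reduce both directions to elementary divisibility. Write $\mathcal O_K=\Z+\Z\omega$ with $\omega=\frac{D_K+\sqrt{D_K}}{2}$, so that $\{1,\omega\}$ is a $\Z$-basis. Then
\[
\mathcal O_f=\Z+f\mathcal O_K=\Z+\Z f\omega ,
\]
since $f\cdot 1\in\Z$ is absorbed. Thus $\mathcal O_f$ is the set of elements $a+b\omega$ with $a,b\in\Z$ and $f\mid b$. This description is the only input needed, and it also confirms that $[\mathcal O_K:\mathcal O_f]=f$, consistent with the normalization in the text.

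For $(\Leftarrow)$, suppose $f'\mid f$. Then $f\mathcal O_K\subseteq f'\mathcal O_K$, because $f\alpha=f'\bigl((f/f')\alpha\bigr)$ and $(f/f')\alpha\in\mathcal O_K$. Adding $\Z$ to both sides gives $\mathcal O_f\subseteq\mathcal O_{f'}$.

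For $(\Rightarrow)$, suppose $\mathcal O_f\subseteq\mathcal O_{f'}$. The element $f\omega$ lies in $\mathcal O_f$, hence in $\mathcal O_{f'}$. By the coordinate description, its $\omega$-coefficient $f$ must then be divisible by $f'$. The one point needing care is that the coordinates $(a,b)$ with respect to $\{1,\omega\}$ are unique, which holds because $\omega\notin\Q$.

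An alternative route is via indices: $\mathcal O_f\subseteq\mathcal O_{f'}\subseteq\mathcal O_K$ gives $[\mathcal O_K:\mathcal O_f]=[\mathcal O_K:\mathcal O_{f'}]\,[\mathcal O_{f'}:\mathcal O_f]$, so $f'\mid f$ immediately. This makes the forward direction a one-liner, using only the stated identity $f=[\mathcal O_K:\mathcal O_f]$. I expect no real obstacle; the only step needing care is the basis description of $\mathcal O_f$.
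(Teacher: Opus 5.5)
Your proof is correct, and the direction $(\Leftarrow)$ is identical to the paper's. Your ``alternative route'' for $(\Rightarrow)$ is exactly what the paper does: from the tower $\mathcal O_f\subseteq\mathcal O_{f'}\subseteq\mathcal O_K$, multiplicativity of indices gives $[\mathcal O_K:\mathcal O_{f'}]\mid[\mathcal O_K:\mathcal O_f]$, i.e.\ $f'\mid f$.

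Your primary route for $(\Rightarrow)$ is a slightly different, more hands-on argument. You use the $\Z$-basis $\{1,\omega\}$ to write $\mathcal O_f=\Z+\Z f\omega$, and then read the divisibility off the $\omega$-coordinate of $f\omega$. Compared with the paper's one-liner, this costs a few lines but is self-contained. It proves the identity $[\mathcal O_K:\mathcal O_f]=f$ along the way, whereas the paper's argument imports that identity from the normalization cited to Cox. It also gives a concrete witness to non-inclusion: when $f'\nmid f$, the element $f\omega$ lies in $\mathcal O_f\setminus\mathcal O_{f'}$.
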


\begin{proof}
	If $f'\mid f$, write $f=f'm$ and obtain
	$\mathcal O_f=\Z+f\mathcal O_K\subseteq \Z+f'\mathcal O_K=\mathcal O_{f'}$.
	Conversely, if $\mathcal O_f\subseteq \mathcal O_{f'}$, then
	$[\mathcal O_K:\mathcal O_{f'}]\mid[\mathcal O_K:\mathcal O_f]$, hence $f'\mid f$.
\end{proof}

\begin{remark}[Terminology]
	Over finite fields, every ordinary elliptic curve has complex multiplication in the
	broad sense that $\End(E)$ is an order in an imaginary quadratic field.
	Throughout this paper, the statement “$E$ has CM by $\mathcal O$” means the precise condition
	$\End(E)\simeq \mathcal O$.
\end{remark}

\subsection{Existence of curves with prescribed endomorphism ring}

Fix an imaginary quadratic order $\mathcal O_f$ of discriminant $D=f^2D_K<0$.
Let
\[
\Ell_{\mathcal O_f}(\F_q)
:=\{\, [E] : E/\F_q \text{ ordinary with } \End(E)\simeq \mathcal O_f \,\}
\]
denote the set of $\F_q$-isomorphism classes with prescribed endomorphism ring.

The following result is a consequence of Deuring's theory and Waterhouse's classification
\cite{Deuring1941,Waterhouse1969}.

\begin{proposition}[Deuring--Waterhouse criterion]\label{prop:DW}
	Let $q=p^n$ with $p\nmid D$. Then $\Ell_{\mathcal O_f}(\F_q)$ is non-empty if and only if
	there exist integers $t,v$ such that
	\[
	4q = t^2 - v^2 D,
	\qquad t \not\equiv 0 \pmod p.
	\]
\end{proposition}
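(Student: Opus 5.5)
We prove the two implications separately. The plan is to identify the Frobenius $\pi_E$ with an element of $\mathcal O_f$ written in the standard basis $\mathcal O_f=\Z\bigl[\tfrac{D+\sqrt D}{2}\bigr]$. The forward direction is then a norm computation. The converse combines Honda--Tate/Deuring existence of an ordinary isogeny class with given trace and Waterhouse's theorem that every order between $\Z[\pi]$ and $\mathcal O_K$ occurs as an endomorphism ring in that class.

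\emph{($\Rightarrow$)} Suppose $E/\F_q$ is ordinary with $\End(E)\simeq\mathcal O_f$. Then $\pi=\pi_E\in\mathcal O_f$, and $K=\Q(\pi)=\Q(\sqrt D)$. Every element of $\mathcal O_f$ has the form $a+b\,\tfrac{D+\sqrt D}{2}=\tfrac{(2a+bD)+b\sqrt D}{2}$ with $a,b\in\Z$, so $\pi=\tfrac{t'+v\sqrt D}{2}$ with $t',v\in\Z$. The trace of $\pi$ is $t'$, hence $t'=t=\tr(\pi)$. Taking norms gives $q=N(\pi)=\tfrac{t^2-v^2D}{4}$, which is the required identity. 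Ordinarity is equivalent to $p\nmid t$, which gives $t\not\equiv0\pmod p$.

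\emph{($\Leftarrow$)} Suppose $4q=t^2-v^2D$ with $p\nmid t$. Since $D<0$, we get $t^2\le 4q$. Equality would force $v=0$ and $t^2=4q$, hence $p\mid t$, a contradiction. So $|t|<2\sqrt q$ and $p\nmid t$. By Deuring/Waterhouse (Honda--Tate for ordinary traces), there is therefore an ordinary elliptic curve over $\F_q$ with trace $t$. Its Frobenius satisfies $\Delta_\pi=t^2-4q=v^2D=(vf)^2D_K$, and $\Q(\pi)=\Q(\sqrt{D})=K$. Thus $\Z[\pi]$ is the order of conductor $|v|f$ in $\mathcal O_K$. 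Since $f\mid |v|f$, Lemma~\ref{lem:order-inclusion} gives $\Z[\pi]\subseteq\mathcal O_f\subseteq\mathcal O_K$. Waterhouse's theorem \cite{Waterhouse1969} states that, in an ordinary isogeny class over $\F_q$ with Frobenius $\pi$, every order $\mathcal O$ with $\Z[\pi]\subseteq\mathcal O\subseteq\mathcal O_K$ is the endomorphism ring of some curve in the class. Applying it to $\mathcal O_f$ shows that $\Ell_{\mathcal O_f}(\F_q)\neq\emptyset$.

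The main obstacle is the converse, where we must invoke the two existence results correctly. Honda--Tate in its ordinary form requires only $p\nmid t$ and $|t|\le 2\sqrt q$, and there is no restriction on $n$ in the ordinary case. Waterhouse's realization theorem must be applied to every intermediate order, not only to $\mathcal O_K$. The hypothesis $p\nmid D$ is not needed beyond guaranteeing that we are in the ordinary, $p$-coprime setting where these results apply cleanly. The forward direction reduces to the observation that integrality of $\pi$ in $\mathcal O_f$ forces the coefficient $v$ to be an integer, which is immediate from the basis of $\mathcal O_f$.
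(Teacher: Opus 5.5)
Your proof is correct and follows the route the paper indicates. The paper gives no argument beyond attributing the criterion to Deuring and Waterhouse, and you make that attribution explicit: a norm computation in the basis $\Z\bigl[\tfrac{D+\sqrt D}{2}\bigr]$ gives ($\Rightarrow$), and for ($\Leftarrow$) you combine Waterhouse's existence of an ordinary class with trace $t$ with his theorem that every order between $\Z[\pi]$ and $\mathcal O_K$ is realized as an endomorphism ring.

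Your remark that the hypothesis $p\nmid D$ is never used is also right. If $p\mid D$, then $4q=t^2-v^2D$ forces $p\mid t$, while every ordinary curve has $p\nmid\disc(\End E)$, so both sides are false.
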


\begin{remark}
	When non-empty, the size of $\Ell_{\mathcal O_f}(\F_q)$ depends on the arithmetic of $q$
	and the factorisation of the Hilbert class polynomial $H_D$ modulo $p$.
	In particular, over $\F_p$ and under complete splitting of $H_D$, one obtains $h(D)$
	distinct $j$-invariants.
\end{remark}

\subsection{$\ell$-isogenies and the volcano stratification}

Fix a prime $\ell\neq p$. Consider the $\ell$-isogeny graph restricted to an ordinary
isogeny class over $\F_q$: vertices are $\F_q$-isomorphism classes of ordinary elliptic curves,
and edges correspond to $\F_q$-rational cyclic $\ell$-isogenies.

The structure of connected components is governed by the $\ell$-adic valuation of the
conductor and the splitting behaviour of $\ell$ in $K$.

Let $D=\disc(\End(E))$ and denote by $\big(\frac{D}{\ell}\big)$ the Kronecker symbol.
The following structure theorem is due to Kohel and subsequent refinements
\cite{Kohel1996,FouquetMorain2002,Sutherland2013Volcanoes}.

\begin{theorem}[Volcano structure]\label{thm:volcano-structure}
	Assume the connected component contains no curves with $j=0$ or $1728$.
	Then it has the structure of an $\ell$-volcano: its vertices are partitioned into levels
	$V_0,\dots,V_d$ such that
	\begin{enumerate}
		\item all vertices in $V_i$ have the same endomorphism ring $\mathcal O_i$, and
		\[
		v_\ell([\mathcal O_K:\mathcal O_i]) = i;
		\]
		
		\item edges are of three types:
		\begin{itemize}
			\item horizontal edges preserve $\End(E)$;
			\item descending edges multiply the conductor by $\ell$;
			\item ascending edges divide the conductor by $\ell$;
		\end{itemize}
		and each vertex with $i>0$ has a unique ascending neighbour;
		
		\item the number of outgoing $\ell$-isogenies depends only on whether $\ell$
		divides the conductor:
		\begin{itemize}
			\item if $\ell\nmid [\mathcal O_K:\End(E)]$, then there are
			$1+\big(\frac{D}{\ell}\big)$ horizontal edges and
			$\ell-\big(\frac{D}{\ell}\big)$ descending edges;
			
			\item if $\ell\mid [\mathcal O_K:\End(E)]$, then there is exactly one ascending edge,
			no horizontal edges, and $\ell$ descending edges.
		\end{itemize}
	\end{enumerate}
\end{theorem}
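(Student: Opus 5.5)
The plan is to prove the volcano structure theorem by Kohel's standard lattice/ideal argument. Everything is first reduced to a local statement about $\ell$-adic lattices. Throughout, $E/\F_q$ is ordinary with Frobenius $\pi$, and $\mathcal O=\End(E)=\mathcal O_f$ with $\Z[\pi]\subseteq\mathcal O\subseteq\mathcal O_K$. Since $\ell\neq p$, the $\F_q$-rational cyclic $\ell$-isogenies from $E$ correspond bijectively to the $\pi$-stable subgroups of order $\ell$ in $E[\ell]$. All of these are rational when $\ell\mid v$, the index $[\mathcal O_K:\Z[\pi]]$; this is the case in which the volcano has positive depth.

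The first key step works on the Tate module $T_\ell(E)$, viewed as a rank-one module over $\mathcal O\otimes\Z_\ell$, which is invertible because it is locally principal. For an isogeny $\phi:E\to E'$ of degree $\ell$, the module $T_\ell(E')$ is a lattice $L'$ with $\ell T_\ell(E)\subset L'\subset T_\ell(E)$ of index $\ell$. Moreover $\End(E')\otimes\Z_\ell$ is the multiplier ring $\{x\in K_\ell : xL'\subseteq L'\}$, and away from $\ell$ the endomorphism rings of $E$ and $E'$ agree. The next step is to classify the index-$\ell$ superlattices of $\ell T$ inside $T=\mathcal O_\ell$, where $\mathcal O_\ell=\mathcal O\otimes\Z_\ell=\Z_\ell+\ell^k\mathcal O_{K,\ell}$. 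These lattices are the $\ell+1$ lines in $T/\ell T\cong\F_\ell^2$.

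Case $k=0$: the $\mathcal O_\ell$-stable lines are the images of the prime ideals above $\ell$, and there are $1+\left(\frac{D}{\ell}\right)$ of them. Each such line has multiplier ring $\mathcal O_\ell$, which gives the horizontal edges. Every other line has multiplier ring $\Z_\ell+\ell\mathcal O_{K,\ell}$, which gives the $\ell-\left(\frac{D}{\ell}\right)$ descending edges.

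Case $k\ge1$: the image of $\ell^{k-1}\mathcal O_{K,\ell}$ is the unique line stable under the larger order. This lattice, rescaled, is $\Z_\ell+\ell^{k-1}\mathcal O_{K,\ell}$ and gives exactly one ascending edge. The remaining $\ell$ lines have multiplier ring $\Z_\ell+\ell^{k+1}\mathcal O_{K,\ell}$ and give the descending edges. There is no horizontal edge, since no proper $\mathcal O_\ell$-ideal of norm $\ell$ is invertible.

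Each of these classifications is a short computation with an explicit $\Z_\ell$-basis $1,\ell^k\omega$. In every case the conductor changes by at most a factor $\ell$ and only its $\ell$-part moves. This yields items 2 and 3 of the theorem.

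Item 1 then follows by organizing the component into levels. Define $V_i$ as the set of vertices with $v_\ell(f)=i$. The prime-to-$\ell$ part of the conductor is constant on the component, since it is invariant along every edge. Hence each level carries a single order $\mathcal O_i$ with $v_\ell([\mathcal O_K:\mathcal O_i])=i$. The depth is $d=v_\ell(v)$ when all these orders contain $\Z[\pi]$; this uses the fact that rationality of the descending isogenies requires $\Z[\pi]\subseteq\mathcal O_{i+1}$, which I would check via Lemma~\ref{lem:order-inclusion}. Uniqueness of the ascending neighbour comes directly from the case $k\ge1$.

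I expect the main obstacle to be the passage from lattices to vertices, namely that distinct rational lines give distinct neighbours counted as edges. Distinct kernels can yield isomorphic target curves only through automorphisms of $E$ beyond $\pm1$, which is exactly the case $j=0,1728$. This case is excluded by hypothesis, so $\mathcal O^\times=\{\pm1\}$ and the edge counts are accurate. A second point to handle carefully is that horizontal edges at the surface may be fewer, for example a loop or a double edge when the class group action is small. I would state these counts with multiplicity, as in Kohel and Sutherland.
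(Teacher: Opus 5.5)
\paragraph{Comparison with the paper.} The paper gives no argument for this theorem; it only attributes it to Kohel, Fouquet--Morain and Sutherland. Your Tate-module approach is exactly Kohel's, and your classification of the $\ell+1$ index-$\ell$ lattices $\ell T\subset L'\subset T$ by multiplier ring is correct. There is one slip: for $k\ge1$ the $\mathcal O_\ell$-stable line is the image of the conductor ideal $\ell^k\mathcal O_{K,\ell}$, i.e.\ the lattice $\ell\Z_\ell+\ell^k\mathcal O_{K,\ell}$, which rescales to $\Z_\ell+\ell^{k-1}\mathcal O_{K,\ell}$. The module $\ell^{k-1}\mathcal O_{K,\ell}$ is not even contained in $T$.

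\paragraph{The gap: passing from lattices to $\F_q$-rational edges.} You claim that all $\ell+1$ subgroups of order $\ell$ are $\pi$-stable whenever $\ell\mid v$. This is false. They are all $\pi$-stable exactly when $\pi$ acts as a scalar on $E[\ell]$, i.e.\ $\pi\in\Z+\ell\End(E)$, i.e.\ $\ell\mid[\End(E):\Z[\pi]]$. For example, if $\End(E)=\Z[\pi]$ and $\ell\mid v$, then $\pi$ acts on $E[\ell]\cong\Z[\pi]/\ell\Z[\pi]$ as a non-scalar with a double eigenvalue, so exactly one line is stable. Your lattice count is therefore not an edge count, and the step ``this yields items 2 and 3'' fails.

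\paragraph{The missing filter.} A line $L'$ gives a rational isogeny iff $\pi L'\subseteq L'$, iff $\pi$ lies in the multiplier ring $\mathcal O(L')$.
\begin{itemize}
\item Horizontal and ascending lattices have $\mathcal O(L')\supseteq\mathcal O_\ell\ni\pi$, so they are always rational.
\item Descending lattices have $\mathcal O(L')=\Z_\ell+\ell^{k+1}\mathcal O_{K,\ell}$. Since $\Z[\pi]=\mathcal O_v$ (Lemma~\ref{lem:order-inclusion} applied locally), this contains $\pi$ iff $k+1\le v_\ell(v)$.
\end{itemize}
So descending edges exist precisely above the floor. This gives $d=v_\ell(v)$ directly, replacing your conditional ``when all these orders contain $\Z[\pi]$''.

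\paragraph{Consequence for the statement.} On the floor $V_d$ there are no descending edges: a floor vertex has degree $1$ when $d\ge1$, and $1+\big(\frac{D}{\ell}\big)$ when $d=0$. Item~3 as printed asserts $\ell$ (resp.\ $\ell-\big(\frac{D}{\ell}\big)$) descending edges at every vertex, so it is false on $V_d$ and cannot be proved. The correct statement, as in Kohel and Sutherland and in the paper's own definition of an $\ell$-volcano (degree $\ell+1$ only for $i<d$), restricts these counts to $E\notin V_d$. Your own remark that descending rationality needs $\Z[\pi]\subseteq\mathcal O_{i+1}$ already contradicts item~3 at $i=d$. You need to feed that criterion back into the edge count, rather than treat it only as a side condition on the depth.
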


\begin{definition}[$\ell$-volcano]
	An $\ell$-volcano is a connected graph whose vertices are partitioned into levels
	$V_0,\dots,V_d$ such that:
	\begin{itemize}
		\item each vertex in $V_i$ ($i>0$) has exactly one neighbour in $V_{i-1}$;
		\item edges either stay within a level or connect consecutive levels;
		\item vertices in $V_i$ for $i<d$ have degree $\ell+1$.
	\end{itemize}
	The level $V_0$ is called the surface and $V_d$ the floor.
\end{definition}

\subsection{Notation}

In the sequel, we work inside a fixed ordinary isogeny class over $\F_q$.
For a given $\ell$, we consider the decomposition
\[
V = \bigsqcup_{i=0}^d V_i,
\]
where all curves in $V_i$ have endomorphism ring $\mathcal O_i$.

In Section~\ref{sec:isogenyclass-distribution}, we introduce global (weighted)
distributions of endomorphism rings across the entire isogeny class.
\section{Order distribution inside one ordinary isogeny class over $\F_p$}
\label{sec:isogenyclass-distribution}

In this section we fix a \emph{single ordinary} isogeny class over a prime field
$\F_p$ and describe how endomorphism rings are distributed across the entire
isogeny class. In contrast with the local (componentwise) volcano picture,
this global viewpoint yields genuinely non-trivial distribution laws.

\subsection{The ordinary isogeny class and the conductor parameter}

Let $p$ be a prime and let $t\in\Z$ satisfy
\[
t^2<4p
\qquad\text{and}\qquad
p\nmid t.
\]
Then the isogeny class $\mathcal{I}(t,p)$ of elliptic curves $E/\F_p$ with
$\tr(\pi_E)=t$ is ordinary \cite{Waterhouse1969}.

Set
\[
\Delta:=t^2-4p<0,
\qquad
K:=\Q(\sqrt{\Delta}).
\]
Write uniquely
\begin{equation}\label{eq:Delta-factor}
	\Delta = v^2 D_K,
\end{equation}
where $D_K<0$ is the fundamental discriminant of $K$ and $v\ge 1$.

Let $\mathcal{O}_K$ denote the maximal order of $K$. For each divisor $f\mid v$,
define the quadratic order
\[
\mathcal{O}_f := \Z + f\,\mathcal{O}_K,
\qquad
D_f := \disc(\mathcal{O}_f)=f^2 D_K.
\]

For every $E\in\mathcal{I}(t,p)$ one has
\[
\Z[\pi_E]\subseteq \End(E)\subseteq \mathcal{O}_K,
\]
and $\End(E)\simeq \mathcal{O}_f$ for a unique divisor $f\mid v$
(Deuring \cite{Deuring1941}, Waterhouse \cite{Waterhouse1969}).

\subsection{Weighted class numbers and Deuring decomposition}

Let
\[
w(D):=\#\mathcal{O}_D^\times
\]
denote the number of units in the quadratic order of discriminant $D$, and define
the \emph{weighted class number}
\[
h^\ast(D):=\frac{h(D)}{w(D)}.
\]

The following theorem expresses the decomposition of the isogeny class in terms
of weighted class numbers.

\begin{theorem}[Weighted Deuring decomposition]\label{thm:deuring-decomposition}
	Let $p$ be prime and $t$ satisfy $t^2<4p$ and $p\nmid t$, and write
	$\Delta=t^2-4p=v^2D_K$. Then:
	\begin{enumerate}
		\item For every $E\in\mathcal{I}(t,p)$ one has $\End(E)\simeq \mathcal{O}_f$ for a unique
		$f\mid v$.
		
		\item The isogeny class admits the decomposition
		\begin{equation}\label{eq:hurwitz}
			H(\Delta)=\sum_{f\mid v} h^\ast(D_f),
		\end{equation}
		where $H(\Delta)$ is the Hurwitz class number.
		
		\item More precisely, the contribution of curves with endomorphism ring
		$\mathcal{O}_f$ is given by the weighted count $h^\ast(D_f)$.
	\end{enumerate}
\end{theorem}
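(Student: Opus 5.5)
The proof has three ingredients: the sandwich $\Z[\pi_E]\subseteq \End(E)\subseteq \mathcal O_K$ with Lemma~\ref{lem:order-inclusion} for part (1), Deuring's lifting theorem for part (3), and the classical Hurwitz class number formula for part (2). Parts (2) and (3) are proved together, since (2) will follow by summing (3) over $f\mid v$.

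\textbf{Part (1).} Since $p\nmid t$, the curve $E$ is ordinary. Hence $\End(E)$ is an order of $K=\Q(\pi_E)=\Q(\sqrt{\Delta})$ containing $\Z[\pi_E]$. The order $\Z[\pi_E]$ has discriminant $t^2-4p=\Delta=v^2D_K$, so $\Z[\pi_E]=\mathcal O_v$. By the classification of orders recalled in Section~\ref{sec:prelim}, $\End(E)=\mathcal O_f$ for a unique conductor $f$. Lemma~\ref{lem:order-inclusion}, applied to $\mathcal O_v\subseteq\mathcal O_f$, then gives $f\mid v$.

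\textbf{Part (3).} I first make the counting convention precise: an $\F_p$-isomorphism class $[E]$ is weighted by $1/\#\mathrm{Aut}_{\F_p}(E)$. For ordinary $E$ over $\F_p$ all automorphisms are defined over $\F_p$, since they commute with $\pi_E$ and lie in $\End(E)^\times$. This weight therefore equals $1/w(D_f)$, where $\mathcal O_f=\End(E)$. The content of (3) is then that exactly $h(D_f)$ classes in $\mathcal I(t,p)$ have $\End(E)\simeq\mathcal O_f$, which after weighting gives $h^\ast(D_f)$.

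To prove this I use Deuring's lifting theorem.
\begin{enumerate}
\item Because $p\nmid t$, $p$ splits in $K$ and $p\nmid f$. Hence $p$ splits completely in the ring class field of $\mathcal O_f$, and $H_{D_f}$ has $h(D_f)$ distinct roots in $\F_p$. These roots are the reductions of the CM $j$-invariants, and reduction is injective on them because $p\nmid D_f$.
\item Each root $j$ gives a curve over $\F_p$ with endomorphism ring $\mathcal O_f$, and its Frobenius is an element of norm $p$ in $\mathcal O_f$. The elements of $\mathcal O_f$ of norm $p$ and trace $\pm t$ are of the form $u\pi$ or $u\bar\pi$ with $u$ a unit.
\item Ranging over the twists of $E$ (quadratic twists, plus sextic or quartic twists when $w(D_f)>2$) realises each of these Frobenius elements once up to isomorphism.
\item Complex conjugation interchanges $\pi$ and $\bar\pi$ and permutes the classes.
\end{enumerate}
The bookkeeping in steps 2--4 must show that the $h(D_f)$ $j$-invariants together with their twists contribute exactly total weight $h(D_f)/w(D_f)$ to trace $t$. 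I expect this to be the main obstacle. It requires matching twists against units carefully, and making sure that for $w=4,6$ the extra twists with traces $\neq t$ are not counted. The cleanest route I would try first is the Waterhouse/Deuring bijection between $\F_p$-isomorphism classes in $\mathcal I(t,p)$ with $\End\simeq\mathcal O_f$ and the proper $\mathcal O_f$-ideal classes (Schoof's formulation). This bijection directly gives $h(D_f)$ classes, and then the automorphism weighting gives $h^\ast(D_f)$.

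\textbf{Part (2).} Summing (3) over $f\mid v$ and using the disjointness from (1), the weighted size of $\mathcal I(t,p)$ is $\sum_{f\mid v}h^\ast(D_f)$. The identity with $H(\Delta)$ is the classical formula $H(\Delta)=\sum_{f\mid v}h(D_f)/w(D_f)$, up to the normalisation convention for $H$ (factor $2$ versus $w/2$), which I would state explicitly so that it matches the paper's definition of $h^\ast$. This identity can be cited, or proved directly by grouping positive definite forms of discriminant $\Delta$ by content: a form of content $g$ with $g^2D_f=\Delta$ corresponds to a primitive form of discriminant $\Delta/g^2$.
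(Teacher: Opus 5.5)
Your proposal is correct and is essentially the paper's argument made explicit. The paper only invokes Deuring's correspondence between $\mathcal I(t,p)$ and classes of positive definite forms of discriminant $\Delta$, partitioned by their orders, which is exactly your order-by-order count via the Waterhouse--Schoof $\Cl(\mathcal O_f)$-torsor combined with the weight $1/\#\mathrm{Aut}_{\F_p}(E)=1/w(D_f)$, a weight the paper never states.

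Your normalisation caveat in part (2) is warranted and should be made definite. With the standard Hurwitz class number (weights $2/w$) one gets $\sum_{[E]\in\mathcal I(t,p)}1/\#\mathrm{Aut}_{\F_p}(E)=\sum_{f\mid v}h^\ast(D_f)=\tfrac12 H(\Delta)$. So identity (2) as printed is off by a factor $2$: for $p=7$, $t=4$ one gets $1/6+1/2=2/3$, whereas $H(-12)=4/3$.
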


\begin{proof}
	This follows from Deuring’s correspondence between ordinary isogeny classes
	over $\F_p$ and equivalence classes of positive definite binary quadratic forms
	of discriminant $\Delta$ (see \cite{Deuring1941,Waterhouse1969,Cox2013}).
	The decomposition according to endomorphism rings corresponds to partitioning
	these classes according to their associated orders.
\end{proof}

\begin{remark}
	The quantities $h^\ast(D_f)$ should be interpreted as \emph{weighted counts},
	reflecting the contribution of automorphism groups. In particular, they do not
	always correspond to literal cardinalities of sets of isomorphism classes.
\end{remark}

\subsection{Global weighted densities}

The previous decomposition naturally defines global distributions inside the
isogeny class.

\begin{definition}[Weighted exact density]
	For $f\mid v$, define
	\[
	\Delta^{=,\mathrm{wt}}_f(t,p)
	:=
	\frac{h^\ast(D_f)}{\sum_{g\mid v} h^\ast(D_g)}.
	\]
\end{definition}

\begin{definition}[Weighted containment density]
	For $f\mid v$, define
	\[
	\Delta^{\supseteq,\mathrm{wt}}_f(t,p)
	:=
	\sum_{\substack{g\mid v\\ f\mid g}}
	\Delta^{=,\mathrm{wt}}_g(t,p).
	\]
\end{definition}

\begin{proposition}[Explicit global distributions]
	For each $f\mid v$,
	\[
	\Delta^{=,\mathrm{wt}}_f(t,p)
	=
	\frac{h^\ast(D_f)}{\sum_{g\mid v} h^\ast(D_g)},
	\qquad
	\Delta^{\supseteq,\mathrm{wt}}_f(t,p)
	=
	\frac{\sum_{g\mid v,\ f\mid g} h^\ast(D_g)}{\sum_{g\mid v} h^\ast(D_g)}.
	\]
\end{proposition}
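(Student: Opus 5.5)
The exact-density formula needs no argument beyond unwinding the definition of $\Delta^{=,\mathrm{wt}}_f(t,p)$. The plan is therefore to confirm that this quantity is well defined and then substitute it into the definition of the containment density. For well-definedness, note that each $h^\ast(D_g)=h(D_g)/w(D_g)$ is a positive rational number: $h(D_g)\ge 1$ because the class group of $\mathcal O_g$ is nonempty, and $w(D_g)\in\{2,4,6\}$. Moreover, the index set $\{g: g\mid v\}$ is nonempty since it contains $g=1$. The denominator $\sum_{g\mid v}h^\ast(D_g)$ is therefore strictly positive. By part~(2) of Theorem~\ref{thm:deuring-decomposition}, this denominator equals the Hurwitz class number $H(\Delta)$, so it depends only on $(t,p)$ and not on $f$.

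For the containment density, I substitute the exact formula into each term of
\[
\Delta^{\supseteq,\mathrm{wt}}_f(t,p)=\sum_{g\mid v,\ f\mid g}\Delta^{=,\mathrm{wt}}_g(t,p).
\]
The denominator of every term is the same $f$-independent constant, namely $\sum_{g'\mid v}h^\ast(D_{g'})=H(\Delta)$. It can therefore be pulled out of the finite sum, which leaves $\sum_{g\mid v,\,f\mid g}h^\ast(D_g)$ in the numerator. This is exactly the second displayed formula.

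The only real point of care is bookkeeping: the outer summation variable $g$ and the normalising index must be kept distinct (renamed $g'$ above), so that the constant denominator is clearly independent of the summation. As a sanity check, I would use Lemma~\ref{lem:order-inclusion}. It shows that the condition $f\mid g$ is equivalent to $\mathcal O_g\subseteq\mathcal O_f$, so the numerator is the weighted mass of curves whose endomorphism ring is contained in $\mathcal O_f$. This matches the name ``containment density''. Two consequences follow directly: $\Delta^{\supseteq,\mathrm{wt}}_1(t,p)=1$, and the containment densities are non-increasing along divisibility.
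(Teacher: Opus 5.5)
Your proof is correct and follows the same route as the paper, which simply states that the formulas follow directly from the decomposition \eqref{eq:hurwitz} and the inclusion relation; you make explicit the two steps the paper leaves implicit, namely that the common denominator $\sum_{g\mid v}h^\ast(D_g)$ is positive and that it can be pulled out of the finite sum. Your orientation $f\mid g \iff \mathcal O_g\subseteq\mathcal O_f$ agrees with Lemma~\ref{lem:order-inclusion}, whereas the paper's one-line proof states the inclusion the other way round, and identifying the denominator with $H(\Delta)$ is not needed for the identity.
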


\begin{proof}
	This follows directly from the decomposition \eqref{eq:hurwitz} and the inclusion
	relation $\mathcal{O}_f \subseteq \mathcal{O}_g \iff f\mid g$.
\end{proof}

\subsection{Vertical $\ell$-adic distribution}

Fix a prime $\ell\neq p$. The $\ell$-adic valuation of the conductor
controls the position of a curve inside its $\ell$-volcano component.

Define the \emph{level mass}
\[
M_i(t,p;\ell)
:=
\sum_{\substack{f\mid v\\ v_\ell(f)=i}} h^\ast(D_f).
\]

\begin{corollary}[Global $\ell$-adic distribution]
	For every $i\ge 0$,
	\[
	\Pr\big[v_\ell(\mathrm{cond}(\End(E)))=i\big]
	=
	\frac{M_i(t,p;\ell)}{\sum_{g\mid v} h^\ast(D_g)}.
	\]
\end{corollary}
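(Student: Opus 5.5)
The corollary is essentially a bookkeeping consequence of the weighted Deuring decomposition (Theorem~\ref{thm:deuring-decomposition}) together with the explicit formula for $\Delta^{=,\mathrm{wt}}_f(t,p)$. The first step is to make precise what "$\Pr$" means. The natural reading, consistent with the preceding subsection, is the probability measure on $\mathcal I(t,p)$ that assigns each $\F_p$-isomorphism class $[E]$ the weight $1/\#\operatorname{Aut}(E)$, normalised by the total mass. Equivalently, it is the measure on the set $\{f : f\mid v\}$ of possible conductors that gives $f$ the mass $\Delta^{=,\mathrm{wt}}_f(t,p)$. By part~(3) of Theorem~\ref{thm:deuring-decomposition}, the total weighted mass of curves with $\End(E)\simeq\mathcal O_f$ is $h^\ast(D_f)$. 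By part~(2), the total mass is $H(\Delta)=\sum_{g\mid v}h^\ast(D_g)$. So the event $\{\mathrm{cond}(\End(E))=f\}$ has probability $\Delta^{=,\mathrm{wt}}_f(t,p)$, as in the preceding Proposition.

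Next I would observe that the map $E\mapsto \mathrm{cond}(\End(E))$ is well defined on $\mathcal I(t,p)$ with values in the divisors of $v$. This is part~(1) of Theorem~\ref{thm:deuring-decomposition}, together with the uniqueness of $f$ from $\mathcal O_f=\Z+f\mathcal O_K$ and Lemma~\ref{lem:order-inclusion}. Consequently the events $\{\mathrm{cond}=f\}$ for $f\mid v$ are pairwise disjoint and exhaust the class. The event $\{v_\ell(\mathrm{cond})=i\}$ is then the disjoint union of those $\{\mathrm{cond}=f\}$ with $f\mid v$ and $v_\ell(f)=i$. Additivity of the measure gives
\[
\Pr\big[v_\ell(\mathrm{cond}(\End(E)))=i\big]=\sum_{\substack{f\mid v\\ v_\ell(f)=i}}\frac{h^\ast(D_f)}{\sum_{g\mid v}h^\ast(D_g)}=\frac{M_i(t,p;\ell)}{\sum_{g\mid v}h^\ast(D_g)}.
\]
For $i>v_\ell(v)$ the sum is empty, so both sides vanish. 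Summing over $i$ recovers total mass~$1$, which is a useful consistency check.

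The only real obstacle is conceptual rather than computational. One has to justify that the weighted count $h^\ast(D_f)$ really is the measure of the fibre over $f$, i.e.\ that part~(3) of Theorem~\ref{thm:deuring-decomposition} holds as a statement about the mass $\sum_{\End(E)\simeq\mathcal O_f}1/\#\operatorname{Aut}(E)$ (up to a common normalising factor independent of $f$, such as $1/2$ coming from twists). This is also where $j=0,1728$ must be handled: it requires $w(D_f)\in\{4,6\}$, which is possible only for $f=1$ with $D_K\in\{-4,-3\}$. I would take this identification as given from the theorem. Any common factor independent of $f$ cancels in the ratio, so the corollary then follows formally.
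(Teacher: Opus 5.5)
Your proposal is correct and follows the same route as the paper. The paper gives no separate proof and treats the corollary as immediate from the Proposition on explicit global distributions: read $\Pr$ as the normalised weighted measure with fibre masses $h^\ast(D_f)$, then sum $\Delta^{=,\mathrm{wt}}_f(t,p)$ over the disjoint events indexed by $f\mid v$ with $v_\ell(f)=i$. The identification you take as given actually holds exactly with weights $1/\#\operatorname{Aut}_{\F_p}(E)$, with no extra normalising factor: the $\F_p$-classes in $\mathcal I(t,p)$ with $\End(E)\simeq\mathcal O_f$ form a $\Cl(\mathcal O_f)$-torsor, and each has exactly $w(D_f)$ automorphisms.
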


\begin{remark}
	This distribution provides a canonical global description of the vertical
	structure of $\ell$-isogeny volcanoes when averaged over all components.
\end{remark}
\section{Varying the prime: ring class fields and Chebotarev densities}
\label{sec:chebotarev}

Fix an imaginary quadratic discriminant $D<0$ and let
\[
\mathcal O_D = \Z + f\,\mathcal O_K
\]
be the quadratic order of discriminant $D=f^2D_K$ in the imaginary quadratic field
$K=\Q(\sqrt{D_K})$.

Let $H_D(X)\in\Z[X]$ denote the Hilbert class polynomial of discriminant $D$, and let
$L_D$ be the associated \emph{ring class field} of $\mathcal O_D$, i.e.\ the maximal
abelian extension of $K$ corresponding to the ideal class group $\Cl(\mathcal O_D)$.

One has
\begin{equation}\label{eq:ring-class-deg}
	[L_D:K]=h(D),\qquad [L_D:\Q]=2h(D),
\end{equation}
and there is a canonical isomorphism
\[
\Gal(L_D/K)\simeq \Cl(\mathcal O_D)
\]
via the Artin reciprocity map (see \cite[Ch.~9--11]{Cox2013}).

\subsection{CM existence over $\F_p$ and splitting of $H_D \bmod p$}

For a finite field $\F_q$ of characteristic $p$, define
\[
\Ell_{\mathcal O_D}(\F_q)
:=
\{\, j(E)\in \F_q : E/\F_q \text{ ordinary with } \End(E)\simeq \mathcal O_D \,\}.
\]

We recall the classical equivalence relating CM existence, Hilbert class polynomials,
and splitting in the ring class field.

\begin{theorem}[CM existence and splitting criterion]\label{thm:CM-splitting}
	Let $p$ be a prime such that $p\nmid D$. The following are equivalent:
	\begin{enumerate}
		\item $\Ell_{\mathcal O_D}(\F_p)\neq\varnothing$;
		
		\item the polynomial $H_D(X)$ has a root in $\F_p$;
		
		\item $H_D(X)$ splits completely into linear factors in $\F_p[X]$;
		
		\item $p$ splits completely in the ring class field $L_D/\Q$.
	\end{enumerate}
	
	Moreover, when these conditions hold, the polynomial $H_D$ has exactly $h(D)$ distinct
	roots in $\F_p$, corresponding to $h(D)$ distinct $j$-invariants.
\end{theorem}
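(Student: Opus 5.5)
The plan is to use condition (4) as the hub. I will prove (4)$\Rightarrow$(3)$\Rightarrow$(2), then (2)$\Rightarrow$(1), and close the cycle with (1)$\Rightarrow$(4). The ingredients are the main theorem of complex multiplication, $L_D=K(j(\mathcal O_D))$ with $H_D$ the minimal polynomial of $j(\mathcal O_D)$ over $K$ (and over $\Q$), together with Deuring's reduction and lifting theorems. Throughout, $p\nmid D$ guarantees that $p$ is unramified in $L_D$, since the conductor of $L_D/K$ divides $f$ and $K/\Q$ ramifies only at primes dividing $D_K$.

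\emph{(4)$\Rightarrow$(3).} Suppose $p$ splits completely in $L_D$, and fix a prime $\mathfrak P\mid p$ of $L_D$, so that $\mathcal O_{L_D}/\mathfrak P\simeq\F_p$. The $h(D)$ roots $j(\mathfrak a)$, for $[\mathfrak a]\in\Cl(\mathcal O_D)$, are algebraic integers in $L_D$. Reducing them modulo $\mathfrak P$ gives $h(D)$ roots of $H_D \bmod p$ in $\F_p$, so $H_D$ factors into linear factors over $\F_p$.

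For distinctness I invoke Deuring's reduction theorem. Since $p$ splits in $K$, the curves $E_{\mathfrak a}$ have good ordinary reduction at $\mathfrak P$. Because $p\nmid f$, the reductions keep endomorphism ring exactly $\mathcal O_D$. Reduction is injective on CM isomorphism classes with a fixed ordinary order: the map $\Cl(\mathcal O_D)\to\Ell_{\mathcal O_D}(\overline{\F}_p)$ is a bijection of torsors under $\Cl(\mathcal O_D)$. Hence the $h(D)$ reduced $j$-invariants are pairwise distinct. This also gives the ``moreover'' clause.

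\emph{(3)$\Rightarrow$(2)} is trivial. For \emph{(2)$\Rightarrow$(1)}, let $\bar j\in\F_p$ be a root and $E/\F_p$ a curve with $j(E)=\bar j$. By Deuring lifting (or directly by reduction of $E_{\mathfrak a}$ at a prime above $p$), $\bar j$ is the reduction of some $j(\mathfrak a)$.

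\textbf{Main obstacle.} If $p$ is inert in $K$, every reduction of $j(\mathfrak a)$ is supersingular, and such a value can lie in $\F_p$. So (2) alone does not force an \emph{ordinary} curve, and this step needs care. I will first show that a root of $H_D$ in $\F_p$ with $p$ split in $K$ yields an ordinary curve whose endomorphism ring contains $\mathcal O_D$. The index $[\End(E):\mathcal O_D]$ is then prime to $p$ and is preserved under reduction because $p\nmid f$, so $\End(E)\simeq\mathcal O_D$.

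For the inert case, I would either check that the statement implicitly assumes $p$ splits in $K$ (equivalently that an ordinary root exists), or read (2) as ``$H_D$ has a root in $\F_p$ which is the $j$-invariant of an ordinary curve''. I would then add this hypothesis explicitly, since it is what makes the equivalence correct.

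\emph{(1)$\Rightarrow$(4).} Given ordinary $E/\F_p$ with $\End(E)\simeq\mathcal O_D$, the Frobenius $\pi$ lies in $\mathcal O_D$ with norm $p$. Hence $p\mathcal O_D=\pi\bar\pi$, and $(\pi)$ is a \emph{principal} prime ideal of $\mathcal O_D$ of norm $p$, prime to the conductor. By Artin reciprocity, $\Gal(L_D/K)\simeq\Cl(\mathcal O_D)$, so the Frobenius of $(\pi)$ in $L_D/K$ is trivial. Thus $(\pi)$ splits completely in $L_D$. As $p$ splits in $K$ into $(\pi)(\bar\pi)$, and $(\bar\pi)$ is also principal, $p$ splits completely in $L_D/\Q$.

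The consistency condition is that this matches Proposition~\ref{prop:DW}: $4p=t^2-v^2D$ with $p\nmid t$ is precisely the statement that $p$ is the norm of the element $(t+v\sqrt D)/2\in\mathcal O_D$, i.e.\ $p$ is represented by the principal form of discriminant $D$. This is the classical criterion of \cite{Cox2013} for complete splitting in the ring class field.
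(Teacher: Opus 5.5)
\textbf{Verdict.} Your hesitation at (2)$\Rightarrow$(1) is justified, and you should state it as a conclusion rather than a hedge. As written, the theorem is false, so no argument can close your cycle.

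Take $D=-4$, so $H_{-4}=X-1728$ and $L_{-4}=\Q(i)$. For every prime $p\equiv 3 \pmod 4$, $H_{-4}$ has the root $1728$ in $\F_p$, so (2) and (3) hold. But $p$ is inert in $\Q(i)$ and $j=1728$ is supersingular, so (1) and (4) fail.

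The ``moreover'' clause also fails when only (2) or (3) is assumed. The prime $5$ is inert in $\Q(\sqrt{-23})$, and $0$ is the only supersingular $j$-invariant in characteristic $5$. Hence $H_{-23}\equiv X^3 \pmod 5$: it splits into linear factors but has one distinct root instead of $h(-23)=3$.

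These failures are not sporadic. Whenever the Frobenius at an unramified prime is conjugate to complex conjugation, it fixes some root of $H_D$ (a conjugate of the real number $j(\mathcal O_D)$), and the reduction of that root lies in $\F_p$. By Chebotarev, such primes have positive density, and all of them are inert in $K$.

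The paper's proof passes over exactly this point. It identifies the action of Frobenius on the roots of $H_D$ with $\Gal(L_D/K)$, where a fixed root does force the identity. That is only legitimate when $p$ splits in $K$. For inert $p$, the Frobenius lies in $\Gal(L_D/\Q)\setminus\Gal(L_D/K)$ and can have fixed points.

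\textbf{The fix and what your proof then gives.} Either of your proposed repairs works. The cleanest formulation adds $\left(\frac{D}{p}\right)=1$ to (2) and (3), just as the classical criterion for $p=x^2+ny^2$ carries the hypothesis $\left(\frac{-n}{p}\right)=1$. With that amendment, your cycle (4)$\Rightarrow$(3)$\Rightarrow$(2)$\Rightarrow$(1)$\Rightarrow$(4) is a complete proof. It is also more substantive than the paper's sketch, which leaves implicit the two real ingredients you supply:
\begin{enumerate}
\item distinctness of the $h(D)$ reductions, via Deuring's reduction theorem and the $\Cl(\mathcal O_D)$-torsor structure;
\item the Artin-symbol argument that the principal ideal $\pi\mathcal O_D$ (of norm $p$, prime to $f$) has trivial Frobenius in $L_D/K$.
\end{enumerate}

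One detail is worth adding in (2)$\Rightarrow$(1). The geometric endomorphism ring of an ordinary curve is commutative, so every endomorphism commutes with $\pi$ and is therefore defined over $\F_p$. Hence any $E/\F_p$ with $j(E)=\bar j$ has $\End(E)\simeq\mathcal O_D$ over $\F_p$.

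Finally, the equivalence (1)$\Leftrightarrow$(4), which you prove with no extra hypothesis, is the only part used in Theorem~\ref{thm:chebotarev-CM}. So the density $1/(2h(D))$ survives the correction.
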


\begin{proof}
	Assume $p\nmid D$. By Deuring's reduction theory, an elliptic curve over $\F_p$
	has endomorphism ring $\mathcal O_D$ if and only if the Frobenius at $p$ acts trivially
	on the set of CM $j$-invariants of discriminant $D$.
	
	By class field theory, this is equivalent to $p$ splitting completely in the ring
	class field $L_D$. The Galois action on the roots of $H_D$ is identified with
	$\Gal(L_D/K)$, and the factorisation of $H_D \bmod p$ is determined by the Frobenius
	conjugacy class.
	
	In particular, $H_D$ has a root in $\F_p$ if and only if Frobenius acts trivially,
	which is equivalent to complete splitting. In this case, all $h(D)$ roots lie in $\F_p$.
	See \cite{Deuring1941,Waterhouse1969,Cox2013,Silverman2009}.
\end{proof}

\begin{remark}[Isomorphism classes]
	The set $\Ell_{\mathcal O_D}(\F_p)$ parametrises $j$-invariants rather than
	isomorphism classes of curves. Different curves with the same $j$-invariant may occur
	as quadratic twists, but this does not affect the counting of CM points.
\end{remark}

\begin{remark}[Prime powers]
	Let $q=p^n$ with $p\nmid D$. Then $\Ell_{\mathcal O_D}(\F_q)\neq\varnothing$
	if and only if the Frobenius element at $p$ in $\Gal(L_D/\Q)$ has order dividing $n$.
	Equivalently, $H_D(X)$ has a root in $\F_{p^n}$.
\end{remark}

\subsection{Chebotarev density}

We now describe the asymptotic frequency of primes $p$ for which CM by $\mathcal O_D$
occurs over $\F_p$.

\begin{theorem}[Chebotarev density for CM primes]\label{thm:chebotarev-CM}
	Let $D<0$ and let $L_D/\Q$ be the ring class field of $\mathcal O_D$.
	Then the set
	\[
	\mathcal P_D := \{\, p \text{ prime} : p\nmid D \text{ and } \Ell_{\mathcal O_D}(\F_p)\neq\varnothing \,\}
	\]
	has natural density
	\begin{equation}\label{eq:chebotarev-density}
		\dens(\mathcal P_D) = \frac{1}{[L_D:\Q]} = \frac{1}{2h(D)}.
	\end{equation}
\end{theorem}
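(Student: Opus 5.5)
The plan is to reduce the statement to the Chebotarev density theorem through Theorem~\ref{thm:CM-splitting}. First, fix $D<0$. Since the primes dividing $D$ form a finite set, excluding them does not change any natural density. For every prime $p\nmid D$, Theorem~\ref{thm:CM-splitting} (equivalence of (1) and (4)) says that $p\in\mathcal P_D$ if and only if $p$ splits completely in $L_D/\Q$. Hence $\mathcal P_D$ agrees, up to a finite set, with the set $\mathrm{Spl}(L_D/\Q)$ of primes that split completely in $L_D$.

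The second step is to check that $L_D/\Q$ is Galois, so that the Chebotarev density theorem applies in its simplest form. I would use the standard fact that complex conjugation $\tau$ acts on $\Gal(L_D/K)\simeq\Cl(\mathcal O_D)$ by inversion, $\tau\sigma\tau^{-1}=\sigma^{-1}$. It follows that $L_D$ is Galois over $\Q$ with $\Gal(L_D/\Q)\simeq \Cl(\mathcal O_D)\rtimes \Z/2\Z$, a generalised dihedral group of order $2h(D)$; this matches \eqref{eq:ring-class-deg}. Equivalently, $L_D=K(j(\mathcal O_D))$ is the compositum of $K$ with the field generated by a root of $H_D\in\Z[X]$, and this compositum is stable under complex conjugation.

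Third, I apply Chebotarev. A prime $p$ unramified in $L_D$ splits completely exactly when its Frobenius conjugacy class is the identity class $\{1\}$. The Chebotarev density theorem, in its natural-density form (which holds, e.g., via the prime ideal theorem for Artin $L$-functions), then gives
\[
\dens\bigl(\mathrm{Spl}(L_D/\Q)\bigr)=\frac{\#\{1\}}{\#\Gal(L_D/\Q)}=\frac{1}{[L_D:\Q]}=\frac{1}{2h(D)}.
\]
Combining this with the first step yields \eqref{eq:chebotarev-density}.

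The main obstacle is the first step. It relies entirely on Theorem~\ref{thm:CM-splitting}, and specifically on the direction (1)$\Rightarrow$(4). I would make this explicit through Deuring lifting: a curve over $\F_p$ with $\End\simeq\mathcal O_D$ lifts to a CM curve whose $j$-invariant $j$ lies in $L_D$, and the reduction modulo a prime above $p$ lies in $\F_p$. This forces the residue degree to be $1$, and since $p\nmid D$ implies that $p$ is unramified in $L_D$, $p$ splits completely. Conversely, complete splitting gives all roots of $H_D$ in $\F_p$. These roots reduce to ordinary curves, because $p$ splits in $K$ (in particular $p\nmid D$ excludes the inert and ramified cases), and their endomorphism ring is exactly $\mathcal O_D$ because $p\nmid f$.

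The second delicate point is bookkeeping. Only finitely many primes ramify in $L_D$, namely those dividing $D$, so the finite discrepancy between $\mathcal P_D$ and $\mathrm{Spl}(L_D/\Q)$ does not affect densities.
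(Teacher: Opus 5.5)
Your proposal is correct and follows the paper's proof. You identify $\mathcal P_D$, up to the finitely many primes dividing $D$, with the primes splitting completely in $L_D$ via (1)$\Leftrightarrow$(4) of Theorem~\ref{thm:CM-splitting}, and then apply Chebotarev to the identity class of $\Gal(L_D/\Q)$, a group of order $2h(D)$.

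Your optional sketch of (1)$\Leftrightarrow$(4) has two small slips that do not affect the argument. First, $p\nmid D$ does not exclude $p$ inert in $K$; splitting in $K$ comes from complete splitting in $L_D\supseteq K$, or from ordinarity in the other direction. Second, $j \bmod \mathfrak P\in\F_p$ forces residue degree $1$ only once you add that the $h(D)$ conjugates of $j$ have distinct reductions modulo $\mathfrak P$, which holds for ordinary reduction with $p\nmid f$. Then $\sigma_{\mathfrak p}(j)\equiv j^p\equiv j$ gives $\sigma_{\mathfrak p}=1$.
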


\begin{proof}
	By Theorem~\ref{thm:CM-splitting}, the set $\mathcal P_D$ consists exactly of primes
	that split completely in $L_D/\Q$ (excluding finitely many primes dividing $D$).
	
	By the Chebotarev density theorem, primes whose Frobenius element lies in the identity
	conjugacy class have density $1/[L_D:\Q]$. Using \eqref{eq:ring-class-deg} gives
	\eqref{eq:chebotarev-density}. See \cite{Serre1981}.
\end{proof}

\begin{remark}[Effective form]
	Effective versions of Chebotarev yield, for
	\[
	\pi_D(x):=\#\{\,p\le x:\ p\in\mathcal P_D\,\},
	\]
	an asymptotic of the form
	\[
	\pi_D(x)
	=
	\frac{\Li(x)}{2h(D)}
	+
	O\!\left(x\,e^{-c\sqrt{\log x}}\right),
	\]
	for some explicit constant $c>0$ depending on $L_D$.
\end{remark}
\section{Conclusion}\label{sec:conclusion}

In this work, we have provided a unified arithmetic framework for the study of
endomorphism rings within ordinary elliptic isogeny classes over finite fields.
Building on the Deuring--Waterhouse classification, we showed that the possible
endomorphism rings $\End(E)$ are precisely the quadratic orders lying between
$\Z[\pi_E]$ and the maximal order $\mathcal O_K$, and are naturally indexed by
the divisors of the conductor.

Our main contribution is to move beyond the local, componentwise perspective of
$\ell$-isogeny volcanoes and to introduce a genuinely global viewpoint. By
expressing the distribution of endomorphism rings in terms of weighted class
numbers, we obtained explicit formulas for global densities across the entire
isogeny class. These distributions provide a canonical probabilistic description
of conductor values and, consequently, of the vertical structure of $\ell$-volcanoes
when averaged over all components.

On the horizontal axis, varying the prime $p$, we related the occurrence of a
fixed CM order $\mathcal O_D$ to splitting conditions in the associated ring
class field $L_D$. This yields a precise asymptotic frequency governed by the
Chebotarev density theorem, namely a natural density of $1/(2h(D))$ for primes
admitting CM by $\mathcal O_D$. This establishes a clear bridge between the
arithmetic of class fields and the statistical behaviour of isogeny classes.

Taken together, these results highlight two complementary distribution laws:
a vertical, conductor-driven structure inside a fixed isogeny class, and a
horizontal, Chebotarev-governed distribution as the base field varies. This
duality provides a coherent framework for understanding the arithmetic and
combinatorial properties of isogeny graphs.

Beyond their intrinsic theoretical interest, these results have potential
applications in computational number theory and in isogeny-based cryptography,
where understanding the distribution of endomorphism rings is crucial for
algorithmic navigation, parameter selection, and security analysis.

Several directions for future work naturally arise. One may seek to refine these
distributions at the level of individual volcano components, to obtain
non-weighted counting results under additional arithmetic constraints, or to
extend the analysis to higher-dimensional abelian varieties and more general
isogeny graphs.

\medskip
\section*{Funding}
No external funding was received for this work.

\section*{Data Availability}
No new datasets were generated or analysed in this study.
All data needed to evaluate the results are contained in the article.

\section*{Code Availability}
Any computational checks can be reproduced using standard computer algebra systems
(e.g.\ SageMath/PARI) following the algorithms described in the text.  No new software
package was created for this work.

\section*{Conflicts of Interest}
The author declares no conflict of interest.

\section*{Ethical Approval}
Not applicable.

\section*{Consent to Participate}
Not applicable.

\section*{Consent for Publication}
Not applicable.

\section*{Author Contributions}
Conceptualization, methodology, formal analysis, writing (original draft), and writing
(review and editing): M.~El~Baraka and S. Ezzouak.


\end{document}